\newtheorem{Theorem}{Theorem}
\newtheorem{Definition}{Definition}
\newtheorem{Remark}{Remark}
\begin{document}

\title{A Scale Variational Principle of Herglotz}

\author{Ricardo Almeida\\
\texttt{ricardo.almeida@ua.pt}}

\date{Center for Research and Development in Mathematics and Applications (CIDMA)\\
Department of Mathematics, University of Aveiro, 3810--193 Aveiro, Portugal}

\maketitle

\begin{abstract}

The Herglotz problem is a generalization of the fundamental problem of the calculus of variations. In this paper, we consider a class of non-differentiable functions, where the dynamics is described by a scale derivative. Necessary conditions are derived to determine the optimal solution for the problem. Some other problems are considered, like transversality conditions, the multi-dimensional case, higher-order derivatives and for several independent variables.

\bigskip

\noindent \textbf{Keywords}: calculus of variations; scale derivative.

\smallskip

\noindent \textbf{Mathematics Subject Classification}: 49K05; 26A33.
\end{abstract}


\section{Introduction}

The calculus of variations deals with optimization of a given functional, whose algebraic expression is the integral of a given function, that depends on time, space and the velocity of the trajectory:
$$x \mapsto  \int_a^b L(t,x(t),\dot x(t))\, dt.$$
 The variational principle of Herglotz can be seen as an extension of such classical theories, but instead of an integral, we have the functional as a solution of a differential equation (see \cite{Guenther2,Herglotz}):
$$\left\{\begin{array}{l}
\dot{z}(t)=L(t,x(t),\dot x(t),z(t)), \quad \mbox{ with } \, t\in[a,b],\\
z(a)=z_a .\end{array}\right.$$
Without the dependence of $z$, we can convert this problem into a calculus of variations problem. In fact, integrating the differential equation
$$\dot{z}(t)=L(t,x(t),\dot x(t))$$
from $a$ to $b$, we obtain
$$z(b)=\int_a^b\left[L(t,x(t),\dot x(t))+\frac{z_a}{b-a}\right]\,dt.$$
Recently, more advances were made namely proving Noether's type theorems for the variational principle of Herglotz (see e.g. \cite{Georgieva1,Georgieva2,Georgieva3,Guenther1,Guenther2,Orum}). The aim of this paper is to consider the Herglotz problem, but the trajectories $x(\cdot)$ may be non-differentiable functions. We believe that this situation may model more efficiently  certain physical problems, like fractals.

The organization of the paper is the following. In Section \ref{sec2} we define what is a scale derivative, following the concept as presented in \cite{Cresson1}, and we present some of its main properties, like the algebraic rules, integration by parts formula, etc. In Section \ref{sec3} we prove our new results. After presenting the Herglotz scale problem, we prove a necessary condition that every extremizer must fulfill. Some generalizations of the main result are also presented to complete the study.

\section{Scale calculus}
\label{sec2}

We review some definitions and the main results from \cite{Cresson1} that we will need. For more on the subject, see references \cite{Almeida1,Cresson1,Cresson2}.

From now on, let $\alpha, \beta,h$ be reals in $]0,1[$ with $\alpha+\beta>1$ and $h \ll 1$, and consider $I:=[a-h,b+h]$.

\begin{Definition} Let $f:I\rightarrow \mathbb{R}$ be a function. The delta derivative of $f$ at $t$ is  defined by
$$\Delta_h[f](t):=\frac{f(t+ h)-f(t)}{h}, \quad \mbox{for} \quad t\in[a-h,b],$$
and the nabla derivative of $f$ at $t$ is  defined by
$$\nabla_h[f](t):=\frac{f(t)-f(t-h)}{h}, \quad \mbox{for} \quad t\in[a,b+h].$$
\end{Definition}

If $f$ is differentiable, then
$$\lim_{h\to 0}\Delta_{h}[f](t)=\lim_{h\to 0}\nabla_{h}[f](t) = f'(t).$$

These two operators can be combined into a single one, where the real part is the mean value of such operators, and the complex part measures the difference between them.

\begin{Definition}
The $h$-scale derivative of $f$ at $t$ is given by
\begin{equation}
\label{eq:def:cp}
\frac{{\Box_{h}}f}{\Box t}(t)
=\frac12 \left[ \left( \Delta_{h}[f](t) + \nabla_{h}[f](t) \right)+i \left( \Delta_{h}[f](t) - \nabla_{h}[f](t) \right) \right], \quad \mbox{for}\quad  t \in [a,b].
\end{equation}
\end{Definition}

For complex valued functions $f$, such definition is extended by
$$\frac{{\Box_{h}}f}{\Box t}(t)= \frac{{\Box_{h}}\mbox{Re}f}{\Box t}(t)+ i \frac{{\Box_{h}}\mbox{Im}f}{\Box t}(t).$$
We now explain how to drop the dependence on the parameter $h$ in the definition of the scale derivative. First, consider the set ${C^0_{conv}}([a,b]\times ]0,1[,\mathbb{C})$ of the functions  $g\in {C^0}([a,b]\times ]0,1[,\mathbb{C})$ for which the limit
$$\lim_{h\to 0}g(t,h)$$
exists for all $t\in [a,b]$, and let $E$ be a complementary space of ${C^0_{conv}}([a,b]\times ]0,1[,\mathbb{C})$ in ${C^0}([a,b]\times  ]0,1[,\mathbb{C})$.

Define  $\pi$ the projection of ${C^0_{conv}}([a,b]\times ]0,1[,\mathbb{C})\oplus E $ onto ${C^0_{conv}}([a,b]\times ]0,1[,\mathbb{C})$,
$$\begin{array}{lcll}
\pi: & {C^0_{conv}}([a,b]\times ]0,1[,\mathbb{C})\oplus E & \to & {C^0_{conv}}([a,b]\times ]0,1[,\mathbb{C})\\
&g:= g_{conv}+g_E  & \mapsto & \pi(g)=g_{conv}.
\end{array}$$

Using these definitions, we arrive at the main concept of \cite{Cresson1}.

\begin{Definition}
\label{def:ourHD}
The scale derivative of $f\in {C^0}(I,\mathbb{C})$, denoted by $\frac{\Box f}{\Box t}$,  is defined by
\begin{equation}
\label{eq:scaleDer}
\frac{\Box f}{\Box t}(t):=\left< \frac{{\Box_{h}}f}{\Box t} \right>(t), \quad t \in [a,b],
\end{equation}
where
$$\left< \frac{{\Box_{h}}f}{\Box t} \right>(t):=   \lim_{h\to 0}\pi\left(\frac{{\Box_{h}}f}{\Box t}(t)\right).$$
\end{Definition}

\begin{Definition}
Given $f: I^n= [a-nh,b+nh]\rightarrow \mathbb{C}$, define the higher-order scale derivative of $f$ by
$$\frac{\Box^n f}{\Box t^n}(t)=\frac{\Box}{\Box t}\left( \frac{\Box^{n-1} f}{\Box t^{n-1}} \right)(t), \quad t\in[a,b],$$
where $\frac{\Box f^1}{\Box t^1}:= \frac{\Box f}{\Box t}$ and $\frac{\Box f^0}{\Box t^0}:=f$.
\end{Definition}

We will adopt the notation $\Box^n f(t)$ instead of $\frac{\Box^n f}{\Box t^n}(t)$ when there is no danger of confusion.

Scale partial derivatives are also considered here. They are defined as in the standard case.

\begin{Definition}
Let $f:\prod_{i=1}^n[a_i-h,b_i+h]\to\mathbb{R}$ be a function. Define, for each $i\in\{1,\ldots,n\}$,
$$\Delta_h^i[f](t_1,\ldots,t_n):=\frac{f(t_1,\ldots,t_{i-1},t_i+ h,t_{i+1},\ldots,t_n)-f(t_1,\ldots,t_{i-1},t_i,t_{i+1},\ldots,t_n)}{h},$$
for $t_i\in[a_i-h,b_i]$ and for $t_j\in[a_j-h,b_j+h] \, \mbox{ if } \, j\not=i$,
and
$$\nabla_h^i[f](t_1,\ldots,t_n):=\frac{f(t_1,\ldots,t_{i-1},t_i,t_{i+1},\ldots,t_n)-f(t_1,\ldots,t_{i-1},t_i-h,t_{i+1},\ldots,t_n)}{h},$$
for $t_i\in[a_i,b_i+h]$ and for $t_j\in[a_j-h,b_j+h], \, \mbox{ if } \, j\not=i.$
The $h$-scale partial derivative of $f$ with respect to the $i-th$ coordinate is given by
$$\frac{{\Box_{h}}f}{\Box t_i}(t_1,\ldots,t_n)
=\frac12 \left[ \left( \Delta_{h}^i[f]+ \nabla_{h}^i[f] \right)+i \left( \Delta_{h}^i[f]- \nabla_{h}^i[f]\right) \right],$$
for  $t_i \in [a_i,b_i].$
\end{Definition}

The definition of partial scale derivatives $\Box f/\Box t_i$ is clear.

In what follows, we will denote
$$C^n_\Box([a,b], \mathbb{K}):= \{f \in C^0(I^n, \mathbb{K})\mid \frac{\Box^k f}{\Box t^k}\in C^0(I^{n-k}, \mathbb{C}), k=1,2,\ldots,n \}, \quad  \mathbb{K}=
 \mathbb{R} \mbox{ or }  \mathbb{K}= \mathbb{C}.$$

\begin{Definition}
Let $f\in C^0(I,\mathbb{C})$ and $\alpha\in]0,1[$ . We say that $f$ is H\"{o}lderian of H\"{o}lder exponent $\alpha$ if there exists a constant $C>0$ such that,
for all $s,t \in I$,
$$|f(t)-f(s)|\leq C |t-s|^\alpha,$$
and we write $f \in H^\alpha(I, \mathbb{C})$, or simply $f\in H^\alpha$ when there is no danger of mislead.
\end{Definition}

We say that $f(t_1,\ldots,t_n)\in H^\alpha$ if $f(t_1,\ldots,t_{i-1},\cdot,t_{i+1},\ldots,t_n)\in H^\alpha$, for all $i\in\{1,\ldots,n\}$ and for all  $t_j\in[a_j,b_j], \, \, j\not=i$.

\begin{Theorem}\label{LeibnizRule}
For all $f\in H^\alpha$ and $g\in H^\beta$, we have
\begin{equation*}\frac{\Box (f.g)}{\Box t}(t)=\frac{\Box f}{\Box t}(t).g(t)+f(t).\frac{\Box g}{\Box t}(t), \quad t \in [a,b].
\end{equation*}
\end{Theorem}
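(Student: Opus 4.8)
The plan is to push the statement down to the level of the $h$-scale derivative \eqref{eq:def:cp}, prove there an exact discrete product rule with an explicit remainder, and then pass to the limit \eqref{eq:scaleDer} using the Hölder hypotheses to annihilate that remainder. To keep the bookkeeping light, write $u=\Delta_h[f](t)$, $v=\nabla_h[f](t)$, $p=\Delta_h[g](t)$, $q=\nabla_h[g](t)$, so that $\frac{\Box_h f}{\Box t}(t)=\frac12[(u+v)+i(u-v)]$ and $\frac{\Box_h g}{\Box t}(t)=\frac12[(p+q)+i(p-q)]$.

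First I would record the finite-difference product rules. Writing $f(t+h)g(t+h)-f(t)g(t)=[f(t+h)-f(t)]g(t+h)+f(t)[g(t+h)-g(t)]$ and then substituting $g(t+h)=g(t)+hp$ gives $\Delta_h[fg](t)=u\,g(t)+f(t)\,p+h\,u\,p$, and the analogous manipulation of the backward difference yields $\nabla_h[fg](t)=v\,g(t)+f(t)\,q-h\,v\,q$. Inserting these into \eqref{eq:def:cp} and grouping the terms that reconstruct the scale derivatives of $f$ and $g$ separately from the $h$-weighted cross terms, the combination splits exactly as
$$\frac{\Box_h(fg)}{\Box t}(t)=\frac{\Box_h f}{\Box t}(t)\,g(t)+f(t)\,\frac{\Box_h g}{\Box t}(t)+R_h(t),\qquad R_h(t)=\frac h2\big[(up-vq)+i(up+vq)\big].$$

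The key step, and the only place the hypothesis $\alpha+\beta>1$ enters, is the estimate of $R_h$. Since $f\in H^\alpha$ one has $|u|,|v|\le C_f\,h^{\alpha-1}$, and since $g\in H^\beta$ one has $|p|,|q|\le C_g\,h^{\beta-1}$, so that each of $up$ and $vq$ is $O(h^{\alpha+\beta-2})$ and hence
$$|R_h(t)|\le h\big(|up|+|vq|\big)\le 2\,C_f C_g\,h^{\alpha+\beta-1}.$$
Because $\alpha+\beta-1>0$, this bound is uniform in $t$ and tends to $0$ as $h\to0$; viewed as a function of $(t,h)$, the remainder therefore lies in $C^0_{conv}([a,b]\times]0,1[,\mathbb{C})$ with limit $0$, so $\left<R_h\right>=0$.

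Finally I would apply the bracket $\left<\cdot\right>=\lim_{h\to0}\pi(\cdot)$ to the displayed identity. By linearity of $\pi$ and of the limit the remainder disappears, and it remains to recognize $\left<\frac{\Box_h f}{\Box t}\,g\right>$ as $\frac{\Box f}{\Box t}\,g$ and likewise for the second term. This is the one point requiring care: the factors $f(t)$ and $g(t)$ are continuous and independent of $h$, so multiplication by them maps $C^0_{conv}$ into itself and commutes with $\pi$ and with the limit, giving $\left<\frac{\Box_h f}{\Box t}\,g\right>=\frac{\Box f}{\Box t}\,g$ and $\left<f\,\frac{\Box_h g}{\Box t}\right>=f\,\frac{\Box g}{\Box t}$. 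Assembling the three pieces yields the Leibniz rule. The main obstacle is thus not the algebra, which is a routine discrete product-rule expansion, but the control of the cross-term remainder, which is exactly what the combined regularity $\alpha+\beta>1$ furnishes; the commutation of $\pi$ with multiplication by $h$-independent continuous functions is the only structural fact about the projection that must be invoked.
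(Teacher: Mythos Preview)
The paper does not supply its own proof of this theorem; it is quoted as a background result from \cite{Cresson1}. Your argument is precisely the standard one: the discrete product rules $\Delta_h[fg]=u\,g+f\,p+h\,up$ and $\nabla_h[fg]=v\,g+f\,q-h\,vq$ are correct, the recombination into $\frac{\Box_h(fg)}{\Box t}=\frac{\Box_h f}{\Box t}\,g+f\,\frac{\Box_h g}{\Box t}+R_h$ with $R_h=\tfrac{h}{2}[(up-vq)+i(up+vq)]$ is exact, and the H\"older bound $|R_h|\le 2C_fC_g\,h^{\alpha+\beta-1}\to 0$ is exactly where $\alpha+\beta>1$ is used. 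This matches the argument in \cite{Cresson1}.

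The one structural point you single out---that $\pi$ commutes with multiplication by an $h$-independent continuous factor---is not automatic from the bare definition of $E$ as an arbitrary complement of $C^0_{conv}$: one needs that such multiplication preserves $E$ (equivalently, that $E$ is a $C^0([a,b])$-submodule). In Cresson's construction this is taken as part of the setup, so within the framework assumed by the paper your proof is complete; it is worth stating that module property explicitly as a hypothesis on $E$ rather than asserting it as a consequence.
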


\begin{Theorem}
\label{Barrow}
Let $f\in C^1_{\Box}([a,b],\mathbb{R})$ be such that
\begin{equation}
\label{nec_condition}
\lim_{h\to 0} \int_a^b \left(\frac{\Box_h f}{\Box t}\right)_E(t) \, dt=0,
\end{equation}
where
$ \frac{\Box_h f}{\Box t}:=  \left(\frac{\Box_h f}{\Box t}\right)_{conv} +  \left(\frac{\Box_h f}{\Box t}\right)_E.$
Then,
$$\int_a^b \frac{\Box f}{\Box t}(t)\, dt=f(b)-f(a).$$
\end{Theorem}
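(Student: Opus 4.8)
The plan is to reduce the integral of the scale derivative to the integral of the $h$-scale derivative, which can be evaluated by hand, and then pass to the limit. Starting from Definition \ref{def:ourHD}, since the projection $\pi$ extracts exactly the convergent component, one has the pointwise identity
$$\frac{\Box f}{\Box t}(t) = \lim_{h\to 0}\left(\frac{\Box_h f}{\Box t}\right)_{conv}(t), \qquad t\in[a,b].$$
Integrating over $[a,b]$ and bringing the limit outside the integral, I then use the decomposition $\left(\frac{\Box_h f}{\Box t}\right)_{conv} = \frac{\Box_h f}{\Box t} - \left(\frac{\Box_h f}{\Box t}\right)_E$ together with the hypothesis \eqref{nec_condition}, which forces $\lim_{h\to 0}\int_a^b\left(\frac{\Box_h f}{\Box t}\right)_E(t)\,dt = 0$. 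This collapses the whole statement to establishing
$$\int_a^b \frac{\Box f}{\Box t}(t)\,dt = \lim_{h\to 0}\int_a^b \frac{\Box_h f}{\Box t}(t)\,dt,$$
so the entire content is carried by the right-hand limit.

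Next I would compute $\int_a^b \frac{\Box_h f}{\Box t}(t)\,dt$ directly from the defining formula for the $h$-scale derivative, separating real and imaginary parts. The elementary change of variables $u=t\pm h$ telescopes the difference quotients into boundary integrals,
$$\int_a^b \Delta_h[f]\,dt = \frac1h\left(\int_b^{b+h} f - \int_a^{a+h} f\right), \qquad \int_a^b \nabla_h[f]\,dt = \frac1h\left(\int_{b-h}^{b} f - \int_{a-h}^{a} f\right).$$
Summing, the real part integrates to $\frac{1}{2h}\left(\int_{b-h}^{b+h} f - \int_{a-h}^{a+h} f\right)$, which by continuity of $f$ and the mean-value theorem for integrals tends to $f(b)-f(a)$ as $h\to 0$. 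The imaginary part integrates to $\frac{i}{2h}$ times a difference of one-sided averages about $b$ and about $a$; since each such average converges to the same value $f(b)$ (resp. $f(a)$), the bracket collapses to $0$ in the limit. Hence $\lim_{h\to 0}\int_a^b \frac{\Box_h f}{\Box t}\,dt = f(b)-f(a)$, which combined with the reduction above finishes the proof.

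The main obstacle is the interchange of limit and integral, namely the passage from $\int_a^b \lim_{h\to 0}(\cdots)_{conv}$ to $\lim_{h\to 0}\int_a^b(\cdots)_{conv}$. The pointwise limit exists by construction, and the assumption $f\in C^1_\Box([a,b],\mathbb{R})$ guarantees that the limiting scale derivative is continuous, hence bounded, on the compact interval $[a,b]$. To make the exchange rigorous I would extract from this continuity a uniform (or dominated) bound on $\left(\frac{\Box_h f}{\Box t}\right)_{conv}$ for small $h$ and invoke the corresponding convergence theorem. This regularity step, and not the explicit boundary-term computation, is where the genuine difficulty lies; the condition \eqref{nec_condition} is precisely what is needed so that the discarded $E$-component does not contaminate the limit.
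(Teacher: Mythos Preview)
The paper does not prove this theorem: it appears in Section~\ref{sec2}, which explicitly reviews definitions and results from \cite{Cresson1}, and the statement is quoted there without argument. So there is no proof in the paper to compare against.

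As for the soundness of your sketch, the explicit computation is correct and is exactly the mechanism behind the result in \cite{Cresson1}: the telescoping of $\int_a^b\Delta_h[f]$ and $\int_a^b\nabla_h[f]$ into boundary averages, with the real part tending to $f(b)-f(a)$ and the imaginary part to $0$ by continuity of $f$, is the whole story once one has reduced to $\lim_{h\to 0}\int_a^b\frac{\Box_h f}{\Box t}\,dt$. The hypothesis \eqref{nec_condition} is used precisely as you say, to discard the $E$-component after integration.

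Your own diagnosis of the weak point is accurate. The passage
\[
\int_a^b \lim_{h\to 0}\left(\frac{\Box_h f}{\Box t}\right)_{conv}(t)\,dt \;=\; \lim_{h\to 0}\int_a^b \left(\frac{\Box_h f}{\Box t}\right)_{conv}(t)\,dt
\]
is not justified by the argument you give: continuity of the \emph{limit} $\frac{\Box f}{\Box t}$ on $[a,b]$ says nothing about a uniform or dominated bound on the approximants $\left(\frac{\Box_h f}{\Box t}\right)_{conv}$ for $h>0$, since pointwise convergence of continuous functions to a continuous limit does not by itself yield domination. In the framework of \cite{Cresson1} this interchange is effectively part of the apparatus (the projection and the bracket $\langle\cdot\rangle$ are set up so that the identity holds at the level of integrals), so if you want a self-contained proof you should either assume the interchange as an additional hypothesis or invoke the construction in \cite{Cresson1} directly rather than trying to recover it from $f\in C^1_\Box$.
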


As a consequence, we have the following integration by parts formula. If
$$\lim_{h\to 0} \int_a^b \left(\frac{\Box_h (f \cdot g)}{\Box t}\right)_E(t) \, dt=0,$$
where $f\in H^\alpha$ and  $g \in H^\beta$,then
$$\int_a^b \frac{\Box f}{\Box t}(t) \cdot g(t) dt = \left[f(t)g(t)\right]_a^b - \int_a^b f(t)\cdot \frac{\Box g}{\Box t}(t) dt.$$


\section{The scale variational principle of Herglotz}
\label{sec3}

The (classical) variational principle of Herglotz is described in the following way. Consider the differential equation
$$\left\{ \begin{array}{l}
\dot{z}(t)=L(t,x(t),\dot x(t),z(t)), \quad \mbox{ with } \, t\in[a,b]\\
z(a)=z_a\\
x(a)=x_a, \, x(b)=x_b,
\end{array}\right.$$
where $x,z$ and $L$ are smooth functions. We wish to find $x$ (and the correspondent solution $z$ of the system) such that $z(b)$ attains an extremum.
 The necessary condition is a second-order differential equation:
 $$\frac{d}{dt}\frac{\partial L}{\partial \dot x}=\frac{\partial L}{\partial x}+\frac{\partial L}{\partial z}\frac{\partial L}{\partial \dot x},$$
for all $t\in[a,b]$.
This can be seen as an extension of the basic problem of calculus of variations. If $L$ does not depend on $z$, then integrating the differential equation along the interval $[a,b]$, we get
$$\left\{ \begin{array}{l}
\displaystyle \int_a^b \left[ L(t,x(t),\dot x(t))+\frac{z_a}{b-a} \right] \, dt \quad \to \quad \mbox{extremize} \\
x(a)=x_a, \, x(b)=x_b.
\end{array}\right.$$
As is well known, many physical phenomena are characterized by non-differentiable functions (e.g. generic trajectories of quantum mechanics \cite{Feynman}, scale-relativity without the hypothesis of space-time differentiability \cite{Nottale}). The usual procedure is to replace the classical derivative by a scale derivative, and consider the space of continuous (and non-differentiable) functions. The scale calculus of variations approach was studied in \cite{Almeida1,Cresson1,Cresson2} for a certain concept of scale derivative $\Box x(t)$:
$$\left\{ \begin{array}{l}
\displaystyle \int_a^b L(t,x(t),\Box x(t))\quad \to \quad \mbox{extremize} \\
x(a)=x_a, \, x(b)=x_b.
\end{array}\right.$$

Motivated by this problem, we define the fundamental scale variational principle of Herglotz.
First we need to define what extremum is.

\begin{Definition}
We say that $z\in C^1([a,b],\mathbb C)$ attains an extremum at $t=b$ if $z'(b)=0$.
\end{Definition}

The problem is then stated in the following way. Consider the system
\begin{equation}\label{MainProblem}\left\{ \begin{array}{l}
\dot{z}(t)=L(t,x(t),\Box x(t),z(t)), \quad \mbox{ with } \, t\in[a,b]\\
z(a)=z_a\\
x(a)=x_a, \, x(b)=x_b.
\end{array}\right.\end{equation}
For simplicity, define
$$[x,z](t):=(t,x(t),\Box x(t),z(t)).$$
We assume that
\begin{enumerate}
\item the trajectories $x$ are in $H^\alpha \cap C^1_\Box([a,b], \mathbb{R})$, $\Box x \in H^\alpha$ and the functional $z$ in $C^2([a,b],\mathbb C)$,
\item for each $x$, there exists a unique solution $z$ of the system \eqref{MainProblem}
\item $z_a,x_a,x_b$ are fixed numbers,
\item the Lagrangian $L:[a,b]\times\mathbb R\times \mathbb C^2\to\mathbb C$ is of class $C^2$.
\end{enumerate}
Observe that the solution $z(t)$ actually is a function on three variables, to know $z=z(t,x(t),\Box x(t))$. When there is no danger of mislead, we will simply write $z(t)$.
We are interested in finding a trajectory $x$ for which the corresponding solution $z$ is such that $z(b)$ attains an extremum. In particular, what necessary conditions such solutions must fulfill. These equations are called Euler-Lagrange equation types. Again, problem  \eqref{MainProblem} can be reduced to the scale variational problem in case $L$ is independent of $z$:
$$ \int_a^b L\left[(t,x(t),\Box x(t))+\frac{z_a}{b-a}\right]\,dt\quad \to \quad \mbox{extremize}.$$

\begin{Theorem}\label{TNC}If the pair $(x,z)$ is a solution of problem  \eqref{MainProblem}, and $\frac{\partial L}{\partial \Box x}[x,z]\in  H^\alpha(I,\mathbb{C})$ ($\alpha\in]0,1[$), then $(x,z)$ is a solution of the equation
\begin{equation}\label{NC}\frac{\Box}{\Box t}\left(\frac{\partial L}{\partial \Box x}[x,z](t)\right)=\frac{\partial L}{\partial x}[x,z](t)+\frac{\partial L}{\partial z}[x,z](t)\frac{\partial L}{\partial \Box x}[x,z](t),\end{equation}
for all $t\in[a,b]$.
\end{Theorem}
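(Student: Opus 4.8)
The plan is to mimic the classical derivation of the Herglotz equation, replacing the ordinary derivative of the trajectory by the scale derivative, the product rule by Theorem \ref{LeibnizRule}, and the ordinary integration by parts by its scale analogue (the corollary to Theorem \ref{Barrow}).

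First I would embed the candidate in a one-parameter family. Fix an admissible variation $\eta$ with $\eta(a)=\eta(b)=0$ and set $x_\epsilon=x+\epsilon\eta$, so that $\Box x_\epsilon=\Box x+\epsilon\,\Box\eta$ by linearity of the scale derivative. Let $z(\cdot,\epsilon)$ be the corresponding solution of \eqref{MainProblem}, with $z(\cdot,0)=z$ and $z(a,\epsilon)=z_a$ for all $\epsilon$, and put $\zeta(t):=\frac{\partial z}{\partial\epsilon}(t,\epsilon)\big|_{\epsilon=0}$. Differentiating the differential equation in \eqref{MainProblem} with respect to $\epsilon$ at $\epsilon=0$ and exchanging $\partial_t$ with $\partial_\epsilon$ (justified by the $C^2$ regularity of $L$ and the smooth dependence of solutions on parameters) yields the linear equation
\begin{equation*}
\dot\zeta(t)=\frac{\partial L}{\partial x}[x,z](t)\,\eta(t)+\frac{\partial L}{\partial \Box x}[x,z](t)\,\Box\eta(t)+\frac{\partial L}{\partial z}[x,z](t)\,\zeta(t),
\end{equation*}
with $\zeta(a)=0$. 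Extremality of $z(b)$, i.e. stationarity of $\epsilon\mapsto z(b,\epsilon)$ at $\epsilon=0$, gives the condition $\zeta(b)=0$.

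Next I would integrate this linear equation using the factor $\lambda(t):=\exp\!\left(-\int_a^t\frac{\partial L}{\partial z}[x,z](s)\,ds\right)$, which is $C^1$ and nowhere zero, so $\Box\lambda=\dot\lambda=-\frac{\partial L}{\partial z}[x,z]\,\lambda$ since the scale derivative of a differentiable function is its ordinary derivative. Multiplying by $\lambda$ turns the right-hand side into a total derivative; integrating over $[a,b]$ and using $\lambda(a)=1$, $\zeta(a)=\zeta(b)=0$ annihilates the boundary term and leaves
\begin{equation*}
0=\int_a^b \lambda\,\frac{\partial L}{\partial x}[x,z]\,\eta\,dt+\int_a^b \lambda\,\frac{\partial L}{\partial \Box x}[x,z]\,\Box\eta\,dt.
\end{equation*}
Applying the scale integration by parts (corollary to Theorem \ref{Barrow}) to the second integral with $f=\eta$ and $g=\lambda\,\frac{\partial L}{\partial \Box x}[x,z]$, the boundary term vanishes because $\eta(a)=\eta(b)=0$, so that $\int_a^b \lambda\,\frac{\partial L}{\partial \Box x}[x,z]\,\Box\eta\,dt=-\int_a^b \eta\,\Box\!\left(\lambda\,\frac{\partial L}{\partial \Box x}[x,z]\right)dt$. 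The fundamental lemma of the calculus of variations (applied to real and imaginary parts, using that smooth test functions are admissible) then forces $\lambda\,\frac{\partial L}{\partial x}[x,z]=\Box\!\left(\lambda\,\frac{\partial L}{\partial \Box x}[x,z]\right)$. Expanding the right-hand side by the Leibniz rule of Theorem \ref{LeibnizRule}, substituting $\Box\lambda=-\frac{\partial L}{\partial z}[x,z]\,\lambda$, and dividing by the nonvanishing $\lambda$ gives precisely \eqref{NC}.

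The main obstacle is the correct use of the scale calculus rather than the derivation's algebra. The scale derivative admits no general chain rule, so $L[x,z]$ cannot be differentiated directly; every $\Box$-differentiation must be routed through the Leibniz rule and through the differentiable factor $\lambda$, which is exactly why expressing $\Box(\lambda\,\partial L/\partial\Box x)$ via Theorem \ref{LeibnizRule} is the crux. This step---and the preceding integration by parts---demands the H\"older hypothesis $\frac{\partial L}{\partial \Box x}[x,z]\in H^\alpha$ (together with $\lambda$ being Lipschitz and $\alpha+\beta>1$) and, for Theorem \ref{Barrow}, the vanishing of $\lim_{h\to0}\int_a^b\left(\frac{\Box_h(\eta\cdot g)}{\Box t}\right)_E dt$, so verifying these regularity conditions is where the real work lies.
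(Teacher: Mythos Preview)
Your proposal is correct and follows essentially the same route as the paper: introduce the variation $x+\epsilon\eta$, derive the linear ODE for the first variation $\zeta$ (the paper's $\theta$), solve it with the integrating factor $\lambda$, use $\zeta(a)=\zeta(b)=0$ to obtain the integral identity, integrate by parts via the scale formula, apply the fundamental lemma, and then expand $\Box(\lambda\,\partial L/\partial\Box x)$ with Theorem~\ref{LeibnizRule} before dividing by $\lambda$. You have also correctly flagged the exact technical hypotheses the paper imposes on the admissible variations (the H\"older regularity and the vanishing of the $E$-component integral needed for Theorem~\ref{Barrow}); the only cosmetic difference is that the paper additionally requires $\Box\eta(a)=0$ on the variation class.
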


\begin{proof} Let $\epsilon$ be an arbitrary real, and consider variation functions of $x$ of type $x(t)+\epsilon \eta(t)$, with $\eta\in H^\beta(I,\mathbb{R}) \cap C^1_{\Box}([a,b], \mathbb{R})$ ($\beta\in]0,1[$), $\eta(a)=\eta(b)=\Box \eta(a)=0$, and
$$\lim_{h \to 0} \int_a^b\left( \frac{\Box_h}{\Box t}\left(\lambda(t)\frac{\partial L}{\partial \Box x}[x,z](t) \eta(t)\right)\right)_E \, dt =0.$$
The corresponding rate of change of $z$, caused by the change of $x$ in the direction of $\eta$, is given by
$$\theta (t)=\frac{d}{d\epsilon} \left.z(t,x(t)+\epsilon\eta(t),\Box x(t)+\epsilon \Box \eta(t))\right|_{\epsilon=0}.$$
Then
$$\begin{array}{ll}
\dot{\theta}(t)&=\displaystyle\frac{d}{dt}\frac{d}{d\epsilon} \left.z(t,x(t)+\epsilon\eta(t),\Box x(t)+\epsilon\Box \eta(t))\right|_{\epsilon=0}\\
&=\displaystyle\frac{d}{d\epsilon} \left.L(t,x(t)+\epsilon\eta(t),\Box x(t)+\epsilon\Box \eta(t),z(t,x(t)+\epsilon\eta(t),
\Box x(t)+\epsilon\Box \eta(t))\right|_{\epsilon=0}\\
&=\displaystyle\frac{\partial L}{\partial x}[x,z](t)\eta(t)+\frac{\partial L}{\partial \Box x}[x,z](t)\Box \eta(t)+\frac{\partial L}{\partial z}[x,z](t)\theta(t).
\end{array}$$
We obtain a first order linear differential equation on $\theta$, whose solution is
$$\lambda(b)\theta(b)-\theta(a)=\int_a^b\lambda(t)\left[\frac{\partial L}{\partial x}[x,z](t)\eta(t)+\frac{\partial L}{\partial \Box x}[x,z](t)\Box\eta(t)\right]dt,$$
where
$$\lambda(t)=\exp\left(-\int_a^t \frac{\partial L}{\partial z}[x,z](\tau)d\tau \right).$$
Using the fact that $\theta(a)=\theta(b)=0$, we get
$$\int_a^b\lambda(t)\left[\frac{\partial L}{\partial x}[x,z](t)\eta(t)+\frac{\partial L}{\partial \Box x}[x,z](t)\Box\eta(t)\right]dt=0.$$
Integrating by parts the second term, we obtain
$$\int_a^b\left[\lambda(t)\frac{\partial L}{\partial x}[x,z](t)-\frac{\Box}{\Box t}\left(\lambda(t)\frac{\partial L}{\partial \Box x}[x,z](t)\right)\right]\eta(t)dt+\left[\eta(t) \lambda(t)\frac{\partial L}{\partial \Box x}[x,z](t)\right]_a^b=0.$$
Since $\eta(a)=\eta(b)=0$, and $\eta$ is an arbitrary function elsewhere,
$$\lambda(t)\frac{\partial L}{\partial x}[x,z](t)-\frac{\Box}{\Box t}\left(\lambda(t)\frac{\partial L}{\partial \Box x}[x,z](t)\right)=0,$$
for all $t\in[a,b]$.
Since the function $t\mapsto \lambda(t)$ is differentiable, and the function $t\mapsto \frac{\partial L}{\partial \Box x}[x,z](t)$ is in $H^\alpha$, it follows that
$$\lambda(t)\left(\frac{\partial L}{\partial x}[x,z](t)+\frac{\partial L}{\partial z}[x,z](t)\frac{\partial L}{\partial \Box x}[x,z](t)-\frac{\Box}{\Box t}\left(\frac{\partial L}{\partial \Box x}[x,z](t)\right)\right)=0.$$
Finally, since $\lambda(t)>0$, for all $t$, we get
$$\frac{\Box}{\Box t}\left(\frac{\partial L}{\partial \Box x}[x,z](t)\right)=\frac{\partial L}{\partial x}[x,z](t)+\frac{\partial L}{\partial z}[x,z](t)\frac{\partial L}{\partial \Box x}[x,z](t),$$
for all $t \in[a,b]$.
\end{proof}

\begin{Remark} Assume that the set of state functions $x$ is $C^1([a,b],\mathbb R)$. Then equation \eqref{NC} becomes
$$\frac{d}{dt}\left(\frac{\partial L}{\partial \dot{x}}[x,z](t)\right)=\frac{\partial L}{\partial x}[x,z](t)+\frac{\partial L}{\partial z}[x,z](t)\frac{\partial L}{\partial \dot{x}}[x,z](t),$$
which is the generalized variational principle of Herglotz as in \cite{Herglotz}.
\end{Remark}

\begin{Theorem}\label{TNC2} Let the pair $(x,z)$ be a solution of the problem  \eqref{MainProblem}, but now $x(b)$ is free. Then $(x,z)$ is a solution of the equation
$$\frac{\Box}{\Box t}\left(\frac{\partial L}{\partial \Box x}[x,z](t)\right)=\frac{\partial L}{\partial x}[x,z](t)+\frac{\partial L}{\partial z}[x,z](t)\frac{\partial L}{\partial \Box x}[x,z](t),$$
for all $t\in[a,b]$, and verifies the transversality condition
$$\frac{\partial L}{\partial \Box x}[x,z](b)=0.$$
\end{Theorem}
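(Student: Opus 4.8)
The plan is to repeat the variational argument of the proof of Theorem \ref{TNC} verbatim, with the single modification that the endpoint value $\eta(b)$ is no longer forced to vanish. Concretely, I would take variations $x+\epsilon\eta$ with $\eta\in H^\beta(I,\mathbb{R})\cap C^1_\Box([a,b],\mathbb{R})$ satisfying only $\eta(a)=\Box\eta(a)=0$, together with the same limiting $E$-component condition that makes the scale integration-by-parts formula applicable. Setting $\theta(t)=\frac{d}{d\epsilon}z(t,x+\epsilon\eta,\Box x+\epsilon\Box\eta)|_{\epsilon=0}$ exactly as before, I obtain the identical linear ODE for $\theta$, solve it with the integrating factor $\lambda(t)=\exp\left(-\int_a^t\frac{\partial L}{\partial z}[x,z](\tau)\,d\tau\right)$, and use $\theta(a)=0$ (since $z(a)=z_a$ is fixed) together with the extremum condition $\theta(b)=0$ to reach
$$\int_a^b\lambda(t)\left[\frac{\partial L}{\partial x}[x,z](t)\eta(t)+\frac{\partial L}{\partial \Box x}[x,z](t)\Box\eta(t)\right]dt=0.$$

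The decisive difference arises at the integration-by-parts step. Applying the scale integration-by-parts formula to the second term, the boundary contribution at $t=a$ still vanishes because $\eta(a)=0$, but the contribution at $t=b$ now survives, yielding
$$\int_a^b\left[\lambda(t)\frac{\partial L}{\partial x}[x,z](t)-\frac{\Box}{\Box t}\left(\lambda(t)\frac{\partial L}{\partial \Box x}[x,z](t)\right)\right]\eta(t)\,dt+\eta(b)\,\lambda(b)\,\frac{\partial L}{\partial \Box x}[x,z](b)=0.$$

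To separate the two assertions I would argue in two stages. First, restrict attention to the subfamily of admissible variations that additionally satisfy $\eta(b)=0$; for these the boundary term disappears, and the argument is exactly that of Theorem \ref{TNC}: since $\eta$ is arbitrary in the interior, $\lambda$ is differentiable, $\frac{\partial L}{\partial \Box x}[x,z]\in H^\alpha$, and $\lambda>0$, the Euler--Lagrange equation \eqref{NC} follows. Once that equation is established the integral term vanishes identically for every admissible $\eta$, so the displayed identity collapses to $\eta(b)\,\lambda(b)\,\frac{\partial L}{\partial \Box x}[x,z](b)=0$. Choosing a variation with $\eta(b)\neq0$ and using $\lambda(b)>0$ then forces the transversality condition $\frac{\partial L}{\partial \Box x}[x,z](b)=0$.

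The main obstacle I anticipate is bookkeeping rather than conceptual: I must check that the enlarged class of variations, in particular those with $\eta(b)\neq0$, still lies in $H^\beta\cap C^1_\Box$ and still meets the limiting $E$-component hypothesis that legitimizes the integration-by-parts formula, and that this class is simultaneously rich enough to localize the Euler--Lagrange equation (via variations vanishing at $b$) and to probe the boundary term (via variations with $\eta(b)\neq0$). Granting the existence of such variations, the two-stage ``interior then boundary'' extraction is routine.
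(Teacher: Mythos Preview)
Your proposal is correct and matches the paper's own proof, which simply says to follow the proof of Theorem \ref{TNC} to obtain the Euler--Lagrange equation, after which the remaining boundary term $\left[\eta(t)\lambda(t)\frac{\partial L}{\partial \Box x}[x,z](t)\right]_a^b=0$ together with $\eta(a)=0$ and the arbitrariness of $\eta(b)$ yields the transversality condition. Your two-stage ``interior then boundary'' extraction is exactly what the paper's terse argument is implicitly invoking.
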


\begin{proof} Following the proof of Theorem \ref{TNC}, the Euler-Lagrange equation is deduced. Then
$$\left[\eta(t) \lambda(t)\frac{\partial L}{\partial \Box x}[x,z](t)\right]_a^b=0.$$
Since $\eta(a)=0$ and $\eta(b)$ is arbitrary, we obtain the transversality condition.
\end{proof}

\textbf{Multi-dimensional case}

For simplicity, we considered so far one state function $x$ only, but the multi-dimensional case  $(x_1,\ldots,x_n)$ is easily studied.

\begin{Theorem}\label{TNC3} Let $\alpha\in]0,1[$ and let the vector $(x_1,\ldots,x_n,z)$ be a solution of the problem: find   $(x_1,\ldots,x_n)$ that extremizes $z(b)$, with
\begin{equation}\label{System2}\left\{ \begin{array}{l}
\dot{z}(t)=L(t,x_1(t),\ldots,x_n(t),\Box x_1(t),\ldots,\Box x_n(t),z(t)), \quad \mbox{ with } \, t\in[a,b]\\
z(a)=z_a\\
x_i(a)=x_{ia}, \, x_i(b)=x_{ib}
\end{array}\right.\end{equation}
where, for all $i\in\{1,\ldots,n\}$,
\begin{enumerate}
\item the trajectories  $x_i$ are in $H^\alpha \cap C^1_\Box([a,b], \mathbb{R})$, $\Box x_i \in H^\alpha$ and the functional $z$ in $C^2([a,b],\mathbb C)$,
\item $z_a,x_{ia},x_{ib}$ are fixed numbers,
\item $\frac{\partial L}{\partial \Box x_i}[x_1,\ldots,x_n,z]\in  H^\alpha(I,\mathbb{C})$
\item the Lagrangian $L:[a,b]\times\mathbb R^n\times \mathbb C^{n+1}\to\mathbb C$ is of class $C^2$.
\end{enumerate}
Then, for all $i\in\{1,\ldots,n\}$, $(x_1,\ldots,x_n,z)$ is a solution of the equation
$$\frac{\Box}{\Box t}\left(\frac{\partial L}{\partial \Box x_i}[x_1,\ldots,x_n,z](t)\right)=\frac{\partial L}{\partial x_i}[x_1,\ldots,x_n,z](t)+\frac{\partial L}{\partial z}[x_1,\ldots,x_n,z](t)\frac{\partial L}{\partial \Box x_i}[x_1,\ldots,x_n,z](t),$$
for all $t\in[a,b]$.
\end{Theorem}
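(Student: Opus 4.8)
The plan is to reduce the multi-dimensional statement to $n$ independent instances of the single-variable argument already carried out in Theorem \ref{TNC}. The device is to fix an arbitrary index $i \in \{1,\ldots,n\}$ and perturb only the $i$-th trajectory, holding all the others frozen. Concretely, I would replace $x_i$ by $x_i + \epsilon \eta_i$, where $\eta_i \in H^\beta(I,\mathbb{R}) \cap C^1_\Box([a,b],\mathbb{R})$ satisfies $\eta_i(a) = \eta_i(b) = \Box \eta_i(a) = 0$ together with the usual integrability condition that makes the integration by parts formula applicable, while keeping $x_j$ for $j \neq i$ unchanged.

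Next I would introduce the induced rate of change $\theta_i(t)$ of the solution $z$, defined exactly as in the proof of Theorem \ref{TNC} but with only the $i$-th argument varied. Differentiating and applying the chain rule to the defining equation in \eqref{System2}, every term attached to $x_j$ or $\Box x_j$ with $j \neq i$ disappears, leaving
$$\dot{\theta}_i(t) = \frac{\partial L}{\partial x_i}[x_1,\ldots,x_n,z](t)\,\eta_i(t) + \frac{\partial L}{\partial \Box x_i}[x_1,\ldots,x_n,z](t)\,\Box\eta_i(t) + \frac{\partial L}{\partial z}[x_1,\ldots,x_n,z](t)\,\theta_i(t).$$
This is formally the same first-order linear differential equation encountered in the one-dimensional case. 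The point that makes the reduction clean is that the coefficient $\partial L/\partial z$ is independent of which coordinate is perturbed, so a single integrating factor $\lambda(t) = \exp\left(-\int_a^t \frac{\partial L}{\partial z}[x_1,\ldots,x_n,z](\tau)\, d\tau\right)$ serves for every $i$.

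From this point the reasoning copies Theorem \ref{TNC} verbatim. Solving the linear ODE and imposing $\theta_i(a) = \theta_i(b) = 0$ gives
$$\int_a^b \lambda(t)\left[\frac{\partial L}{\partial x_i}[x_1,\ldots,x_n,z](t)\,\eta_i(t) + \frac{\partial L}{\partial \Box x_i}[x_1,\ldots,x_n,z](t)\,\Box\eta_i(t)\right]dt = 0.$$
I would then integrate the second term by parts (legitimate by hypothesis (3), which places $\partial L/\partial \Box x_i$ in $H^\alpha$, together with the integrability condition on $\eta_i$), observe that the boundary contributions vanish because $\eta_i(a) = \eta_i(b) = 0$, and invoke the fundamental lemma of the calculus of variations. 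Dividing out the positive factor $\lambda(t)$ produces the claimed Euler--Lagrange equation for the $i$-th coordinate; since $i$ was arbitrary, all $n$ equations follow.

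I do not expect any genuine obstacle here, since the hard analytic content---the Barrow-type theorem \ref{Barrow} and the resulting integration by parts formula---was established once and for all in Section \ref{sec2} and is simply reused once per coordinate. The only points requiring a moment's care are checking that freezing the other coordinates still yields an admissible competitor, and confirming that the regularity assumptions (1)--(4) are precisely those needed so that each of the $n$ single-variable arguments runs without modification.
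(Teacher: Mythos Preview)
Your proposal is correct and matches the paper's treatment: the paper does not give an explicit proof of Theorem \ref{TNC3}, but prefaces it by remarking that the multi-dimensional case ``is easily studied,'' which is precisely the coordinate-by-coordinate reduction to Theorem \ref{TNC} that you outline. Nothing further is needed.
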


\begin{Theorem}\label{TNC4} Let the vector $(x_1,\ldots,x_n,z)$ be a solution of the problem as stated in Theorem \ref{TNC3},  but now $x_i(b)$ is free, for all $i \in\{1,\ldots,n\}$. Then, for all $i\in\{1,\ldots,n\}$, $(x_1,\ldots,x_n,z)$ is a solution of the equation
$$\frac{\Box}{\Box t}\left(\frac{\partial L}{\partial \Box x_i}[x_1,\ldots,x_n,z](t)\right)=\frac{\partial L}{\partial x_i}[x_1,\ldots,x_n,z](t)+\frac{\partial L}{\partial z}[x_1,\ldots,x_n,z](t)\frac{\partial L}{\partial \Box x_i}[x_1,\ldots,x_n,z](t),$$
for all $t\in[a,b]$, and verifies the transversality condition
$$\frac{\partial L}{\partial \Box x_i}[x_1,\ldots,x_n,z](b)=0.$$
\end{Theorem}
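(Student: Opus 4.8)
The plan is to reduce the multi-dimensional, free-endpoint problem to the single-coordinate situation already handled in Theorems \ref{TNC3} and \ref{TNC2}, and then merge their two conclusions. I would fix an index $i\in\{1,\ldots,n\}$ and perturb only the $i$-th trajectory while keeping $x_j$ fixed for every $j\neq i$. Concretely, I would consider variations $x_i(t)+\epsilon\eta_i(t)$ with $\eta_i\in H^\beta(I,\mathbb R)\cap C^1_\Box([a,b],\mathbb R)$ satisfying $\eta_i(a)=\Box\eta_i(a)=0$ together with the auxiliary vanishing condition on the $E$-projection that makes the integration by parts consequence of Theorem \ref{Barrow} applicable to the relevant product. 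The decisive change with respect to Theorem \ref{TNC3} is that now $\eta_i(b)$ is left free, reflecting that $x_i(b)$ is no longer prescribed.

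Next I would introduce the rate of change $\theta(t)=\frac{d}{d\epsilon}z(t,\ldots)|_{\epsilon=0}$ and, differentiating the defining equation of \eqref{System2} with respect to $\epsilon$, obtain the first-order linear ODE
$$\dot\theta(t)=\frac{\partial L}{\partial x_i}[x_1,\ldots,x_n,z](t)\,\eta_i(t)+\frac{\partial L}{\partial \Box x_i}[x_1,\ldots,x_n,z](t)\,\Box\eta_i(t)+\frac{\partial L}{\partial z}[x_1,\ldots,x_n,z](t)\,\theta(t).$$
Introducing the integrating factor $\lambda(t)=\exp\left(-\int_a^t\frac{\partial L}{\partial z}[x_1,\ldots,x_n,z](\tau)\,d\tau\right)$ and using the data $\theta(a)=0$ (the initial value $z_a$ being fixed) together with the extremum condition $\theta(b)=0$, I would arrive at
$$\int_a^b\lambda(t)\left[\frac{\partial L}{\partial x_i}[x_1,\ldots,x_n,z](t)\,\eta_i(t)+\frac{\partial L}{\partial \Box x_i}[x_1,\ldots,x_n,z](t)\,\Box\eta_i(t)\right]dt=0.$$

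Integrating the second term by parts through Theorem \ref{Barrow} produces the interior integrand multiplying $\eta_i$ together with the boundary term $\left[\eta_i(t)\,\lambda(t)\,\frac{\partial L}{\partial \Box x_i}[x_1,\ldots,x_n,z](t)\right]_a^b$. I would first restrict attention to variations with $\eta_i(b)=0$: the boundary term then vanishes entirely (also because $\eta_i(a)=0$), and the arbitrariness of $\eta_i$ in the open interval forces the integrand to vanish; using that $\lambda$ is differentiable and $\frac{\partial L}{\partial \Box x_i}[x_1,\ldots,x_n,z]\in H^\alpha$, Theorem \ref{LeibnizRule} lets me expand the scale derivative of the product and, after factoring out $\lambda(t)>0$, I recover the $i$-th Euler-Lagrange equation exactly as in Theorem \ref{TNC3}. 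With that equation established, I would then let $\eta_i(b)$ be arbitrary again: the interior integral now vanishes identically, leaving $\lambda(b)\,\frac{\partial L}{\partial \Box x_i}[x_1,\ldots,x_n,z](b)=0$, whence $\lambda(b)>0$ yields the transversality condition $\frac{\partial L}{\partial \Box x_i}[x_1,\ldots,x_n,z](b)=0$. Repeating the argument for each $i$ completes the proof. The only delicate point, exactly as in the earlier theorems, is guaranteeing that the auxiliary $E$-projection limit hypothesis underlying Theorem \ref{Barrow} is available for the admissible class of $\eta_i$; the rest is a routine coordinate-by-coordinate transcription of the single-variable arguments already established.
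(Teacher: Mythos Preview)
Your proposal is correct and follows exactly the approach the paper intends: the paper states Theorem~\ref{TNC4} without proof, having remarked that the multi-dimensional case ``is easily studied,'' and your coordinate-by-coordinate reduction to the single-variable arguments of Theorems~\ref{TNC} and~\ref{TNC2} (first deriving the Euler--Lagrange equation with $\eta_i(b)=0$, then freeing $\eta_i(b)$ to obtain the transversality condition) is precisely that easy study.
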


\textbf{Higher-order derivatives case}

\begin{Theorem}\label{TNC5} Let $\alpha\in]0,1[$ and let the pair $(x,z)$ be a solution of the problem: find   $x$ that extremizes $z(b)$, with
$$\left\{ \begin{array}{l}
\dot{z}(t)=L(t,x,\Box x(t),\ldots,\Box^n x(t),z(t)), \quad \mbox{ with } \, t\in[a,b]\\
z(a)=z_a\\
\Box^i x(a)=x_{ia}, \, \Box^i x(b)=x_{ib}, \quad \mbox{for all } \, i\in\{0,\ldots,n-1\},
\end{array}\right.$$
where
\begin{enumerate}
\item the trajectories  $x$ are in $H^\alpha \cap C^n_\Box([a,b], \mathbb{R})$, $\Box x \in H^\alpha$ and the functional $z$ in $C^2([a,b],\mathbb C)$,
\item $z_a,x_{ia},x_{ib}$ are fixed numbers, for all $i\in\{0,\ldots,n-1\}$,
\item $\frac{\partial L}{\partial \Box^i x}[x,z]\in  H^\alpha(I^n,\mathbb{C})$, for all $i\in\{1,\ldots,n\}$,
\item $[x,z](t)=(t,x,\Box x(t),\ldots,\Box^n x(t),z(t))$ and $[x](t)=(t,x,\Box x(t),\ldots,\Box^n x(t))$,
\item the Lagrangian $L:[a,b]\times\mathbb R^{n+1}\to\mathbb R$ is of class $C^2$.
\end{enumerate}
Then, $(x,z)$ is a solution of the equation
$$\lambda(t)\frac{\partial L}{\partial x}[x,z](t)+\sum_{i=1}^n (-1)^i \frac{\Box^{i}}{\Box t^{i}}\left(\lambda(t)\frac{\partial L}{\partial \Box^ix}[x,z](t)\right)=0,$$
for all $t\in[a,b]$.
\end{Theorem}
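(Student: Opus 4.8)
The plan is to follow the argument of Theorem \ref{TNC} verbatim through the derivation of the governing ODE, then replace the single integration by parts by $n$ iterated ones. First I would take a variation $x(t)+\epsilon\eta(t)$ with $\eta\in H^\beta(I,\mathbb{R})\cap C^n_\Box([a,b],\mathbb{R})$ satisfying $\Box^i\eta(a)=\Box^i\eta(b)=0$ for all $i\in\{0,\ldots,n-1\}$, together with the analogue of the convergence condition guaranteeing validity of the scale integration by parts formula for each product $\lambda(t)\frac{\partial L}{\partial \Box^i x}[x,z](t)\,\Box^{i-1}\eta(t)$. Setting $\theta(t)$ to be the rate of change of $z$ along this variation and differentiating with the help of the governing equation yields the first-order linear ODE
$$\dot{\theta}(t)=\frac{\partial L}{\partial x}[x,z](t)\eta(t)+\sum_{i=1}^n \frac{\partial L}{\partial \Box^i x}[x,z](t)\Box^i\eta(t)+\frac{\partial L}{\partial z}[x,z](t)\theta(t).$$

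Solving with the same integrating factor $\lambda(t)=\exp\left(-\int_a^t \frac{\partial L}{\partial z}[x,z](\tau)d\tau\right)$ and imposing $\theta(a)=\theta(b)=0$ gives
$$\int_a^b\lambda(t)\left[\frac{\partial L}{\partial x}[x,z](t)\eta(t)+\sum_{i=1}^n \frac{\partial L}{\partial \Box^i x}[x,z](t)\Box^i\eta(t)\right]dt=0.$$
The crux is then to integrate the $i$-th summand by parts exactly $i$ times, each step invoking Theorem \ref{Barrow} and the scale Leibniz rule (Theorem \ref{LeibnizRule}). A single application transfers one scale derivative off $\eta$ onto the coefficient $\lambda(t)\frac{\partial L}{\partial \Box^i x}[x,z](t)$, introducing a factor $(-1)$ and a boundary term of the form $\left[\Box^k\!\left(\lambda\frac{\partial L}{\partial \Box^i x}[x,z]\right)\!(t)\cdot\Box^{i-1-k}\eta(t)\right]_a^b$; after $i$ iterations every boundary term carries a factor $\Box^{j}\eta$ with $0\le j\le i-1\le n-1$, hence vanishes by the choice of $\eta$, and the remaining integrand becomes $(-1)^i\Box^i\!\left(\lambda\frac{\partial L}{\partial \Box^i x}[x,z]\right)\!(t)\,\eta(t)$.

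Collecting all the terms, the variational identity collapses to
$$\int_a^b\left[\lambda(t)\frac{\partial L}{\partial x}[x,z](t)+\sum_{i=1}^n (-1)^i \frac{\Box^i}{\Box t^i}\left(\lambda(t)\frac{\partial L}{\partial \Box^i x}[x,z](t)\right)\right]\eta(t)\,dt=0,$$
and since $\eta$ is otherwise arbitrary, the fundamental lemma of the calculus of variations forces the bracket to vanish pointwise, which is precisely the stated equation.

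The main obstacle I anticipate is the regularity bookkeeping needed to license each of the $n$ integrations by parts: at every stage one must confirm that the coefficient $\Box^k\!\left(\lambda\frac{\partial L}{\partial \Box^i x}[x,z]\right)$ remains Hölderian of a suitable exponent so that Theorem \ref{Barrow} applies and that the corresponding boundary term is well defined. This is where the hypotheses $\frac{\partial L}{\partial \Box^i x}[x,z]\in H^\alpha$, the differentiability of $\lambda$, and the membership $x\in C^n_\Box([a,b],\mathbb{R})$ are consumed; the verification of the convergence condition for the iterated products, ensuring the $E$-component integrals vanish in the limit $h\to 0$, is the only place requiring care beyond the routine algebra of the single-derivative case.
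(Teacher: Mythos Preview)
Your proposal is correct and mirrors the paper's proof almost exactly: variation $x+\epsilon\eta$, linear ODE for $\theta$, integrating factor $\lambda$, $n$ iterated scale integrations by parts killing all boundary terms, then the fundamental lemma. The only places the paper is marginally more explicit are in adding the condition $\Box^n\eta(a)=0$ and in enumerating the full family of convergence hypotheses---one for each intermediate product $\frac{\Box^k}{\Box t^k}\bigl(\lambda\,\partial L/\partial\Box^i x\bigr)\,\Box^{i-k-1}\eta$ with $k=0,\ldots,i-1$, not merely the $k=0$ case---which is precisely the bookkeeping you already flag in your closing paragraph.
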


\begin{proof} Let $x(t)+\epsilon \eta(t)$ be a variation function of $x$, with $\epsilon\in\mathbb R$ and $\eta\in H^\beta \cap  C^n_\Box([a,b], \mathbb{R})$ ($\beta\in]0,1[$).
Also, assume that the variations fulfill the conditions:
\begin{enumerate}
\item for all $i=0,\ldots,n-1$, $\Box^i \eta(a)= \Box^i \eta(b)=0$, and $\Box^n \eta(a)=0$,
\item for all $i=1, 2, \ldots, n$ and $k=0, 1,\ldots, i-1$,
$$\lim_{h \to 0} \int_a^b\left( \frac{\Box_h}{\Box t}\left(\lambda(t)\frac{\Box^k}{\Box t^k}\left(\frac{\partial L}{\partial \Box^i x}[x,z](t)\right)\Box^{i-k-1} \eta(t) \right)\right)_E \, dt =0.$$
\end{enumerate}
Define
$$\theta (t)=\frac{d}{d\epsilon} \left.z(t,x(t)+\epsilon \eta(t),\Box x(t)+\epsilon \Box \eta(t),\ldots,\Box^n x(t)+\epsilon \Box^n \eta(t))\right|_{\epsilon=0}.$$
Then
$$\dot{\theta}(t)=\frac{\partial L}{\partial x}[x,z](t)\eta(t)+\sum_{i=1}^n\frac{\partial L}{\partial \Box^ix}[x,z](t)\Box ^i\eta(t)+\frac{\partial L}{\partial z}[x,z](t)\theta(t).$$
Solving this linear ODE, we arrive at
$$\int_a^b\lambda(t)\left[\frac{\partial L}{\partial x}[x,z](t)\eta(t)+\sum_{i=1}^n\frac{\partial L}{\partial \Box^ix}[x,z](t)\Box ^i\eta(t)\right]dt=0,$$
where
$$\lambda(t)=\exp\left(-\int_a^t \frac{\partial L}{\partial z}[x,z](\tau)d\tau \right).$$
Integrating by parts $n$ times, we obtain the following:
\begin{equation*}
\begin{split}
& \displaystyle\int_a^b \left[\lambda(t)\frac{\partial L}{\partial x}[x,z](t)+\sum_{i=1}^n (-1)^i \frac{\Box^{i}}{\Box t^{i}}\left(\lambda(t)\frac{\partial L}{\partial \Box^ix}[x,z](t)\right)\right] \eta(t) dt
\\
& + \left[\sum_{i=1}^n  \sum_{k=0}^{i-1}(-1)^k \frac{\Box^{k}}{\Box t^{k}}\left(\lambda(t)\frac{\partial L}{\partial \Box^ix}[x,z](t)\right)\Box^{i-1-k} \eta(t) \right]_a^b=0,
\end{split}
\end{equation*}
and rearranging the terms, we get
\begin{equation*}
\begin{split}
& \displaystyle\int_a^b \left[\lambda(t)\frac{\partial L}{\partial x}[x,z](t)+\sum_{i=1}^n (-1)^i \frac{\Box^{i}}{\Box t^{i}}\left(\lambda(t)\frac{\partial L}{\partial \Box^ix}[x,z](t)\right)\right] \eta(t) dt
\\
& + \left[\sum_{i=1}^n  \left[\sum_{k=i}^{n}(-1)^{k-i} \frac{\Box^{k-i}}{\Box t^{k-i}}\left(\lambda(t)\frac{\partial L}{\partial \Box^kx}[x,z](t)\right)\right]\Box^{i-1} \eta(t) \right]_a^b=0.
\end{split}
\end{equation*}
Since $\Box^i \eta(a)= \Box^i \eta(b)=0$, for all $i\in\{0,\ldots,n-1\}$ and $\eta$ is arbitrary  elsewhere, we get
$$\lambda(t)\frac{\partial L}{\partial x}[x,z](t)+\sum_{i=1}^n (-1)^i \frac{\Box^{i}}{\Box t^{i}}\left(\lambda(t)\frac{\partial L}{\partial \Box^ix}[x,z](t)\right)=0,$$
for all $t\in[a,b]$.
\end{proof}

\begin{Theorem}\label{TNC6} Let the pair $(x,z)$ be a solution of the problem as stated in Theorem \ref{TNC5}, but now $\Box^i x(b)$ is free, for all $i\in\{0,\ldots,n-1\}$. Then, $(x,z)$ is a solution of the equation
$$\lambda(t)\frac{\partial L}{\partial x}[x,z](t)+\sum_{i=1}^n (-1)^i \frac{\Box^{i}}{\Box t^{i}}\left(\lambda(t)\frac{\partial L}{\partial \Box^ix}[x,z](t)\right)=0,$$
for all $t\in[a,b]$, and verifies the transversality condition
$$\sum_{k=i}^{n}(-1)^{k-i} \frac{\Box^{k-i}}{\Box t^{k-i}}\left(\lambda(t)\frac{\partial L}{\partial \Box^kx}[x,z](t)\right)=0 \quad \mbox{at} \quad t=b,$$
for all $i\in\{1,\ldots,n\}$.
\end{Theorem}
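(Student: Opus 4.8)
The plan is to run the argument of Theorem~\ref{TNC5} verbatim as far as the first-variation identity, and then to use the fact that the right endpoint is now unconstrained to read off the boundary conditions, exactly as the passage from Theorem~\ref{TNC} to Theorem~\ref{TNC2} did in the first-order case. For a variation $x+\epsilon\eta$ with $\eta\in H^\beta\cap C^n_\Box([a,b],\mathbb{R})$ subject to the same $E$-projection conditions imposed in Theorem~\ref{TNC5}, the identical computation of $\theta$, the solution of the linear ODE, and the $n$-fold integration by parts produce
$$\int_a^b\left[\lambda(t)\frac{\partial L}{\partial x}[x,z](t)+\sum_{i=1}^n(-1)^i\frac{\Box^i}{\Box t^i}\left(\lambda(t)\frac{\partial L}{\partial \Box^ix}[x,z](t)\right)\right]\eta(t)\,dt+B=0,$$
where, after the same rearrangement as in Theorem~\ref{TNC5},
$$B=\left[\sum_{i=1}^n\left(\sum_{k=i}^n(-1)^{k-i}\frac{\Box^{k-i}}{\Box t^{k-i}}\left(\lambda(t)\frac{\partial L}{\partial \Box^kx}[x,z](t)\right)\right)\Box^{i-1}\eta(t)\right]_a^b.$$

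First I would recover the Euler--Lagrange equation. The initial data $\Box^i x(a)=x_{ia}$ are still prescribed, so every admissible $\eta$ satisfies $\Box^i\eta(a)=0$ for $i=0,\ldots,n-1$ (together with $\Box^n\eta(a)=0$). Temporarily restricting to those $\eta$ that in addition obey $\Box^i\eta(b)=0$ for $i=0,\ldots,n-1$ annihilates $B$, and the fundamental-lemma argument already used in Theorem~\ref{TNC5} (legitimate here because the bracketed integrand is continuous) forces that integrand to vanish on $[a,b]$; this is precisely the asserted Euler--Lagrange equation.

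With the Euler--Lagrange equation established, the integral term is zero for \emph{every} admissible $\eta$, so the full identity collapses to $B=0$. The contribution of $B$ at $t=a$ vanishes because $\Box^{i-1}\eta(a)=0$ for all $i=1,\ldots,n$, leaving only
$$\sum_{i=1}^n\left(\sum_{k=i}^n(-1)^{k-i}\frac{\Box^{k-i}}{\Box t^{k-i}}\left(\lambda(t)\frac{\partial L}{\partial \Box^kx}[x,z](t)\right)\right)\bigg|_{t=b}\Box^{i-1}\eta(b)=0.$$
Since $\Box^i x(b)$ is now free for $i=0,\ldots,n-1$, the values $\Box^{i-1}\eta(b)$, $i=1,\ldots,n$, may be prescribed independently; isolating one of them at a time forces each coefficient to vanish, which is exactly the transversality condition
$$\sum_{k=i}^n(-1)^{k-i}\frac{\Box^{k-i}}{\Box t^{k-i}}\left(\lambda(t)\frac{\partial L}{\partial \Box^kx}[x,z](t)\right)=0\quad\text{at }t=b,\qquad i\in\{1,\ldots,n\}.$$

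The one genuinely non-routine step is the construction of the test functions underpinning the last display: I must produce $\eta\in H^\beta\cap C^n_\Box$ that vanishes with all its scale derivatives through order $n-1$ at $a$, realizes an arbitrary prescribed tuple $(\eta(b),\Box\eta(b),\ldots,\Box^{n-1}\eta(b))$, and still meets the vanishing $E$-projection requirements needed for the higher-order integration by parts. Equivalently, the linear map $\eta\mapsto(\Box^{i-1}\eta(b))_{i=1}^n$ must be surjective on the admissible class. Granting this flexibility --- which in the classical $C^n$ setting follows from standard bump-function constructions and should persist in the H\"older/scale framework --- everything else is a direct transcription of Theorems~\ref{TNC2} and~\ref{TNC5}.
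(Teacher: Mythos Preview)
Your proposal is correct and follows precisely the approach the paper takes: the paper gives no separate proof for Theorem~\ref{TNC6}, relying implicitly on the same two-step template used for Theorem~\ref{TNC2} (run the previous proof to get the Euler--Lagrange equation, then exploit the free right-endpoint values to kill the surviving boundary terms). Your write-up is in fact more careful than the paper's treatment, since you explicitly separate the restriction step from the transversality step and flag the genuine issue of constructing admissible $\eta$ with prescribed $(\Box^{i-1}\eta(b))_{i=1}^n$ in the H\"older/scale class---a point the paper simply assumes without comment.
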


\textbf{Several independent variables case}

We generalize Theorem \ref{TNC} for several independent variables. First we fix some notations. The variable time is $t\in[a,b]$, $x=(x_1,\ldots,x_n)\in \Omega:=\prod_{i=1}^n[a_i,b_i]$ are the space coordinates and the state function is $u:=u(t,x)$.

\begin{Theorem}\label{TNC7} Let $\alpha\in]0,1[$ and let the pair $(u,z)$ be a solution of the problem: find   $u$ that extremizes $z(b)$, with
\begin{equation}\label{System4}\left\{ \begin{array}{l}
\dot{z}(t)=\displaystyle \int_\Omega L\left(t,x,u,\frac{\Box u}{\Box t},\frac{\Box u}{\Box x_1},\ldots,\frac{\Box u}{\Box x_n},z(t)\right)\, d^nx, \quad \mbox{ with } \, t\in[a,b]\\
z(a)=z_a\\
u(t,x) \quad \mbox{takes fixed values,} \quad \forall t\in[a,b]\, \forall x \in \partial\Omega\\
u(t,x) \quad \mbox{takes fixed values,} \quad \forall t\in\{a,b\}\, \forall x \in \Omega,\\
\end{array}\right.\end{equation}
where, for all $i\in\{1,\ldots,n\}$,
\begin{enumerate}
\item  the trajectories  $u$ are in $H^\alpha(I\times\Omega, \mathbb{R}) \cap C^1_\Box([a,b]\times\Omega, \mathbb{R})$, $\frac{\Box u}{\Box t},\frac{\Box u}{\Box x_i} \in H^\alpha([a,b]\times\Omega, \mathbb{C})$ and the functional $z$ in $C^2([a,b],\mathbb C)$,
\item $z_a$ is a fixed number,
\item $d^nx=dx_1\ldots dx_n$,
\item $\frac{\partial L}{\partial \Box t}[u,z],\frac{\partial L}{\partial \Box x_i}[u,z] \in  H^\alpha(I\times\Omega, \mathbb{C})$, where $\frac{\partial L}{\partial \Box t}[u,z]$ denotes the partial derivative of $L$ with respect to the variable $\frac{\Box u}{\Box t}$, and  $\frac{\partial L}{\partial \Box x_i}[u,z]$ denotes the partial derivative of $L$ with respect to the variable $\frac{\Box u}{\Box x_i}$, and $[u,z](t)=(t,x,u,\frac{\Box u}{\Box t},\frac{\Box u}{\Box x_1},\ldots,\frac{\Box u}{\Box x_n},z(t))$,
\item $L:[a,b]\times\Omega\times\mathbb R\times\mathbb C^{n+2}\to\mathbb C$ is of class $C^2$.
\end{enumerate}
Then, $(u,z)$ is a solution of the equation
$$\frac{\partial L}{\partial u}[u,z](t)+\frac{\partial L}{\partial \Box t}[u,z](t) \int_\Omega \frac{\partial L}{\partial \Box z}[u,z](t) \, d^nx-\frac{\Box}{\Box t}\left(\frac{\partial L}{\partial \Box t}[u,z](t)\right)-\sum_{i=1}^n \frac{\Box}{\Box x_i} \left(\frac{\partial L}{\partial \Box x_i}[u,z](t)\right)=0,$$
for all $t\in[a,b]$ and for all $x\in\Omega$.
\end{Theorem}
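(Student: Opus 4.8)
The plan is to mimic the proof of Theorem \ref{TNC}, adapting it to the presence of the several spatial variables $x=(x_1,\ldots,x_n)$ and of the integral over $\Omega$ in the dynamics. First I would introduce admissible variations $u(t,x)+\epsilon\eta(t,x)$, where $\epsilon\in\mathbb{R}$ and $\eta\in H^\beta\cap C^1_\Box$ vanishes for $x\in\partial\Omega$ (all $t$) and for $t\in\{a,b\}$ (all $x\in\Omega$), together with the vanishing of the appropriate $E$-parts needed to apply the scale integration by parts formula in the $t$ direction and in each of the $n$ spatial directions. Then I would define the rate of change
$$\theta(t)=\frac{d}{d\epsilon}\left.z\right|_{\epsilon=0},$$
exploiting the key structural fact that, since $z$ (hence $\theta$) is a function of $t$ alone, $\theta(t)$ factors out of every integral over $\Omega$.

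Differentiating the dynamics of \eqref{System4} with respect to $\epsilon$ at $\epsilon=0$ and using the linearity of the scale derivatives $\Box/\Box t$ and $\Box/\Box x_i$, I would obtain the linear ODE
$$\dot\theta(t)=\theta(t)\int_\Omega\frac{\partial L}{\partial z}[u,z](t)\,d^nx+\int_\Omega\left[\frac{\partial L}{\partial u}\eta+\frac{\partial L}{\partial \Box t}\frac{\Box\eta}{\Box t}+\sum_{i=1}^n\frac{\partial L}{\partial \Box x_i}\frac{\Box\eta}{\Box x_i}\right]d^nx.$$
Solving it with the integrating factor
$$\lambda(t)=\exp\left(-\int_a^t\int_\Omega\frac{\partial L}{\partial z}[u,z](\tau)\,d^nx\,d\tau\right),$$
and using $\theta(a)=\theta(b)=0$ (the first because $z(a)=z_a$ is fixed, the second being the extremum condition at $t=b$), I arrive at
$$\int_a^b\lambda(t)\int_\Omega\left[\frac{\partial L}{\partial u}\eta+\frac{\partial L}{\partial \Box t}\frac{\Box\eta}{\Box t}+\sum_{i=1}^n\frac{\partial L}{\partial \Box x_i}\frac{\Box\eta}{\Box x_i}\right]d^nx\,dt=0.$$

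Next I would integrate by parts: in the $t$ variable (carrying the differentiable factor $\lambda$ inside) for the $\Box\eta/\Box t$ term, and in each $x_i$ (with $\lambda$ constant in $x_i$) for the $\Box\eta/\Box x_i$ terms, invoking the scale integration by parts formula that follows Theorem \ref{Barrow}. By the conditions imposed on $\eta$, every boundary term vanishes, leaving
$$\int_a^b\int_\Omega\left[\lambda\frac{\partial L}{\partial u}-\frac{\Box}{\Box t}\left(\lambda\frac{\partial L}{\partial \Box t}\right)-\sum_{i=1}^n\lambda\frac{\Box}{\Box x_i}\left(\frac{\partial L}{\partial \Box x_i}\right)\right]\eta\,d^nx\,dt=0.$$
Since $\eta$ is arbitrary in the interior, a multi-dimensional fundamental lemma of the calculus of variations forces the bracket to vanish pointwise. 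Finally I would expand $\frac{\Box}{\Box t}(\lambda\,\partial L/\partial\Box t)$ by the Leibniz rule (Theorem \ref{LeibnizRule}), using that $\lambda$ is differentiable with $\lambda'(t)=-\lambda(t)\int_\Omega(\partial L/\partial z)\,d^nx$, so that its scale derivative coincides with $\lambda'$; dividing by $\lambda(t)>0$ then produces the Herglotz coupling term $\frac{\partial L}{\partial\Box t}\int_\Omega\frac{\partial L}{\partial z}\,d^nx$ and yields the stated equation.

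The main obstacle I anticipate is the bookkeeping of the iterated scale integration by parts in the $n+1$ independent directions $t,x_1,\ldots,x_n$: one must ensure that the $E$-part hypotheses hold in each direction so that Theorem \ref{Barrow} applies, that all boundary contributions (both on $\{t=a,b\}$ and on $\partial\Omega$) are annihilated by the conditions on $\eta$, and that the fundamental lemma is available over the product domain $[a,b]\times\Omega$. The Hölder assumptions on $\partial L/\partial\Box t$ and $\partial L/\partial\Box x_i$ in the hypotheses are precisely what make these integrations by parts legitimate.
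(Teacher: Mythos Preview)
Your proposal is correct and follows essentially the same route as the paper's own proof: variation $u+\epsilon\eta$ with $\eta$ vanishing on $\partial\Omega$ and at $t\in\{a,b\}$ together with the $E$-part hypotheses, derivation of the linear ODE for $\theta(t)$ with the integrating factor $\lambda(t)=\exp\bigl(-\int_a^t\int_\Omega(\partial L/\partial z)\,d^nx\,d\tau\bigr)$, use of $\theta(a)=\theta(b)=0$, scale integration by parts in $t$ and in each $x_i$, the fundamental lemma on $[a,b]\times\Omega$, and finally the Leibniz expansion of $\frac{\Box}{\Box t}\bigl(\lambda\,\partial L/\partial\Box t\bigr)$ followed by division by $\lambda(t)>0$. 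The only cosmetic difference is that you pull $\lambda$ (constant in $x$) outside $\Box/\Box x_i$ before applying the fundamental lemma, whereas the paper leaves it inside and simplifies afterwards; the arguments are otherwise identical.
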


\begin{proof} Let $u(t,x)+\epsilon \eta(t,x)$ be a variation function of $u$, with $\epsilon\in\mathbb R$ and $\eta\in H^\beta(I\times\Omega, \mathbb{R}) \cap C^1_\Box([a,b]\times\Omega, \mathbb{R})$ ($\beta\in]0,1[$).
Also, assume that the variations fulfill the conditions:
\begin{enumerate}
\item $\eta(t,x)=0,\quad \forall t\in[a,b]\,\forall x \in \partial\Omega$,
\item $\eta(t,x)=0,\quad \forall t\in\{a,b\}\, \forall x \in \Omega$,
\item $\frac{\Box \eta}{\Box t}(a,x)=\frac{\Box \eta}{\Box x_i}(a,x)=0,\quad \forall x \in \Omega$,
\item for all $i=1, 2, \ldots, n$,
$$\lim_{h \to 0} \int_a^b\left( \frac{\Box_h}{\Box t}\left(\lambda(t)\frac{\partial L}{\partial \Box t}[u,z](t) \eta(t)\right)\right)_E \, dt =0.$$
and
$$\lim_{h \to 0} \int_a^b\left( \frac{\Box_h}{\Box x_i}\left(\lambda(t)\frac{\partial L}{\partial \Box x_i}[u,z](t) \eta(t)\right)\right)_E \, dt =0,$$
where
$$\lambda(t)=\exp\left(-\int_a^t\int_\Omega \frac{\partial L}{\partial z}[u,z](\tau) \, d^nx \, d\tau \right).$$
\end{enumerate}
Let
$$\theta (t)=\frac{d}{d\epsilon} \left.z\left(t,x,u+\epsilon\eta,\frac{\Box u}{\Box t}+\epsilon \frac{\Box \eta}{\Box t},\frac{\Box u}{\Box x_1}+\epsilon \frac{\Box \eta}{\Box x_1},\ldots,\frac{\Box u}{\Box x_n}+\epsilon \frac{\Box \eta}{\Box x_n}\right)\right|_{\epsilon=0}.$$
Proceeding with some calculations, we arrive at the ODE
$$\dot{\theta}(t)-\int_\Omega \frac{\partial L}{\partial z}[u,z](t) \, d^nx \,\, \theta(t) =\int_\Omega \frac{\partial L}{\partial u}[u,z](t)\eta+\frac{\partial L}{\partial \Box t}[u,z](t)\frac{\Box\eta}{\Box t}+\sum_{i=1}^n\frac{\partial L}{\partial \Box x_i}[u,z](t)\frac{\Box \eta}{\Box x_i}\, d^nx.$$
Solving the ODE, and taking into consideration that $\theta(a)=\theta(b)=0$, we get
$$\int_a^b \int_\Omega \lambda(t)\left[\frac{\partial L}{\partial u}[u,z](t)\eta+\frac{\partial L}{\partial \Box t}[u,z](t)\frac{\Box\eta}{\Box t}+\sum_{i=1}^n\frac{\partial L}{\partial \Box x_i}[u,z](t)\frac{\Box \eta}{\Box x_i}\right]\, d^nx  \, dt=0.$$
Integrating by parts, and considering the boundary conditions over $\eta$, we get
$$\int_a^b\int_\Omega \left[\lambda(t)\frac{\partial L}{\partial u}[u,z](t)
-\frac{\Box}{\Box t}\left(\lambda(t)\frac{\partial L}{\partial \Box t}[u,z](t)\right)-\sum_{i=1}^n\frac{\Box}{\Box x_i} \left(\lambda(t)\frac{\partial L}{\partial x_i}[u,z](t)\right)\right]\eta d^nxdt=0.$$
By the arbitrariness of $\eta$, it follows that for all $t\in[a,b]$ and for all $x\in\Omega$,
$$\lambda(t)\frac{\partial L}{\partial u}[u,z](t)
-\frac{\Box}{\Box t}\left(\lambda(t)\frac{\partial L}{\partial \Box t}[u,z](t)\right)-\sum_{i=1}^n\frac{\Box}{\Box x_i} \left(\lambda(t)\frac{\partial L}{\partial x_i}[u,z](t)\right)=0.$$
Since $\lambda(t)>0$, this condition implies that
$$\frac{\partial L}{\partial u}[u,z](t)+\frac{\partial L}{\partial \Box t}[u,z](t) \int_\Omega \frac{\partial L}{\partial \Box z}[u,z](t) \, d^nx-\frac{\Box}{\Box t}\left(\frac{\partial L}{\partial \Box t}[u,z](t)\right)-\sum_{i=1}^n \frac{\Box}{\Box x_i} \left(\frac{\partial L}{\partial \Box x_i}[u,z](t)\right)=0,$$
for all $t\in[a,b]$ and for all $x\in\Omega$, and the theorem is proved.
\end{proof}


\section*{Acknowledgements}

We would like to thank the two reviewers for their insightful comments on the paper, as these comments led us to an improvement of the work.
This work was supported by Portuguese funds through the CIDMA - Center for Research and Development in Mathematics and Applications,
and the Portuguese Foundation for Science and Technology (FCT-Funda\c{c}\~ao para a Ci\^encia e a Tecnologia), within project UID/MAT/04106/2013.



\end{document}